
\documentclass[10pt]{article}

\usepackage{amsmath,amssymb,amscd,amsthm,makeidx,txfonts,graphicx,url,bm}
\usepackage{a4wide,pstricks,xypic}
\usepackage[backref]{hyperref}

\numberwithin{equation}{subsection}

\setlength{\marginparwidth}{1.2in}
\let\oldmarginpar\marginpar
\renewcommand\marginpar[1]{\-\oldmarginpar[\raggedleft\footnotesize #1]
{\raggedright\footnotesize #1}}

\makeindex

\xyoption{all}
\input{xypic}

\newtheorem{theorem}{Theorem}[section]
\newtheorem{proposition}[theorem]{Proposition}
\newtheorem{corollary}[theorem]{Corollary}

\newtheorem{lemma}[theorem]{Lemma}

\theoremstyle{remark}
\newtheorem{remark}[theorem]{Remark}

\theoremstyle{definition}

\newcounter{margin}
{\end{itshape}  \bigskip}

\def\beq{\begin{eqnarray}}
\def\eeq{\end{eqnarray}}
\def\bes{\begin{eqnarray*}}
\def\ees{\end{eqnarray*}}

\DeclareMathOperator{\Aut}{Aut} 
 \DeclareMathOperator{\Hom}{Hom}
\DeclareMathOperator{\End}{End}
\DeclareMathOperator{\id}{id}

\DeclareMathOperator{\Stab}{Stab}

\def\C{\mathbb{C}}

\def\F{\mathbb{F}}
\def\Q{\mathbb{Q}}

\def\Z{\mathbb{Z}}

\newcommand{\nc}{\newcommand}

\nc{\op}[1]{\mathop{\mathchoice{\mbox{\rm #1}}{\mbox{\rm #1}}
{\mbox{\rm \scriptsize #1}}{\mbox{\rm \tiny #1}}}\nolimits}
\nc{\al}{\alpha}

\nc{\ep}{\varepsilon} \nc{\ga}{\gamma} \nc{\Ga}{\Gamma}
\nc{\la}{\lambda} \nc{\La}{\Lambda} \nc{\si}{\sigma}
\nc{\Sig}{{\Gamma}} \nc{\Om}{\Omega} \nc{\om}{\omega}

\nc{\SL}{{\rm SL}} \nc{\GL}{{\rm GL}} \nc{\PGL}{{\rm PGL}}
\nc{\PSL}{{\rm PSL}}
\nc{\G}{{\rm G}}

\nc{\cpt}{{\op{cpt}}} \nc{\Dol}{{\op{Dol}}} \nc{\DR}{{\op{DR}}}
\nc{\B}{{\op{B}}} \nc{\Triv}{\op{Triv}} \nc{\Hod}{{\op{Hod}}}
\nc{\Log}{{\op{Log}}} \nc{\Exp}{{\op{Exp}}} \nc{\Est}{E_{\op{st}}}
\nc{\Hst}{H_{\op{st}}} \nc{\Left}[1]{\hbox{$\left#1\vbox to
  10.5pt{}\right.\nulldelimiterspace=0pt \mathsurround=0pt$}}
\nc{\Right}[1]{\hbox{$\left.\vbox to
  10.5pt{}\right#1\nulldelimiterspace=0pt \mathsurround=0pt$}}
\nc{\LEFT}[1]{\hbox{$\left#1\vbox to
  15.5pt{}\right.\nulldelimiterspace=0pt \mathsurround=0pt$}}
\nc{\RIGHT}[1]{\hbox{$\left.\vbox to
  15.5pt{}\right#1\nulldelimiterspace=0pt \mathsurround=0pt$}}

\nc{\bee}{{\bf E}} \nc{\bphi}{{\bf \Phi}}

\begin{document}

\title{On the divisibility of $\#\Hom(\Gamma,G)$ by $|G|$}

\author{ Cameron Gordon\\
{\it University of Texas at Austin} \\ {\tt
  gordon@math.utexas.edu}\\ \\
Fernando Rodriguez-Villegas
\\ 
{\it University of Texas at Austin} \\ {\tt
  villegas@math.utexas.edu}\\ \\
 }

\pagestyle{myheadings}

\maketitle

\begin{abstract} 
  We extend and reformulate a result of Solomon on the divisibility of
  the title. We show, for example, that if $\Gamma$ is a finitely
  generated group, then $|G|$ divides $\#\Hom(\Gamma,G)$ for every
  finite group $G$ if and only if $\Gamma$ has infinite
  abelianization. As a consequence we obtain some arithmetic
  properties of the number of subgroups of a given index in such a
  group $\Gamma$.
\end{abstract}

\section{Introduction}
Let $G$ be a finite group.  For any non-negative integer $g$ and a
fixed $z\in G$ consider the set
\begin{equation}
  \label{def-U}
  U_z:=\{(x_1,y_1,\ldots,x_g,y_g)\in G^{2g} \;|\; [x_1,y_1]\cdots[x_g,y_g]=z\},
\end{equation}
where $[x,y]:=xyx^{-1}y^{-1}$.  We have~\cite{Se1}
\begin{equation}
  \label{U-card}
\#U_z=\sum_\chi \left(\frac{|G|}{\chi(1)}\right)^{2g-1}\chi(z),
\end{equation}
where the sum is over all irreducible characters of $G$ (formulas of
this sort were already known to Frobenius). 

In particular, for $z=1$ we obtain
\begin{equation}
\label{frobenius-1}
\#U_1=|G|\sum_\chi \left(\frac{|G|}{\chi(1)}\right)^{2g-2}.
\end{equation}
Since $\chi(1)$ divides $|G|$ for every $\chi$, it follows that $|G|$
divides $\#U_1$ when $g>0$. The purpose of this note is to give a more
direct proof of a generalization of this observation.

We may interpret $U_1$ as the set $\Hom(\pi_1(\Sigma_g),G)$ of
homomorphisms of the fundamental group of a Riemann surface $\Sigma_g$
of genus $g$ to $G$. The question is then the following. Given a
finitely generated group $\Gamma$, say, when do we have that $|G|$
divides $\Hom(\Gamma,G)$ for every finite group $G$?  It turns out
that the answer is quite simple: precisely when $\Gamma$ has an
infinite abelianization (see Corollary~\ref{infinite}).

As it happens, the answer to our question is buried in a paper of
Solomon's~\cite{Sol}, which was the main motivation for the
present note. Solomon's proof assumes that $\Gamma$ has positive
deficiency (i.e., has a presentation with strictly more generators
than relations), but it is easy to see that it can be made to apply
more generally to any group with infinite abelianization. However, his
proof is lengthy and somewhat hard to read; the proof we give is short
and, we hope, more perspicuous. Solomon deduces his result from a more
general statement about homomorphisms from a free group to $G$ that
send specified elements into specified conjugacy classes. As an
immediate corollary to our theorem we get a generalization of this in
which the free group is replaced by any group whose abelianization is
infinite.

\section{$\cal G$-torsors}
Let $G$ be a finite group. A $G$-set $X$ is a set $X$ on which $G$
acts.  We denote by $X/G$ the set of orbits under the action. Let
$\cal G$ be the bundle of groups $\Stab_G(x)$ on $X$. A {\it $\cal
  G$-torsor} of $X$ is a surjective map of sets $\pi: Y \rightarrow
X$, for some set $Y$, such that each fiber $Y_x$ is a principal
homogeneous space for $\Stab_G(x)$. In other words, there is an action
of $\Stab_G(x)$ on $Y_x$ for every $x\in X$ such that for any fixed
$y\in Y_x$ the map
$$
\begin{array}{ccc}
 \Stab_G(x) & \rightarrow & Y_x\\
 s & \mapsto & sy
\end{array}
$$
is a bijection.  It is worth stressing that we {\it do not} require a
global action of $G$ on $Y$ inducing the action of $\Stab_G(x)$ on
$Y_x$. 

We will write $\#$ for the cardinality of a set and reserve $|\cdot|$
for the order of a group.
\begin{lemma}
\label{mass}
Let $X$ be a $G$-set and $\pi:Y \rightarrow X$ a $\cal
G$-torsor. Then

(i) $X$ is finite if and only if $Y$ is finite.

(ii) In this case, $\#Y/|G|=\#(X/G)$. In particular $\#Y/|G|$ is an
integer or, equivalently, $|G|$ divides $\#Y$.
\end{lemma}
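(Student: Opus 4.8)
The plan is to analyze the situation one $G$-orbit at a time. First I would observe that $Y$ decomposes as the disjoint union of the preimages $\pi^{-1}(O)$ as $O$ ranges over the orbits in $X/G$; since a $\cal G$-torsor restricts to a $\cal G$-torsor over each orbit, it suffices to prove both statements when $X$ is a single orbit, and then sum. So fix a transitive $G$-set $X=G/H$, where $H=\Stab_G(x_0)$ for some chosen basepoint $x_0\in X$; note $\#X=[G:H]=|G|/|H|$, which is finite since $G$ is.

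For part (i), fix a point $y_0\in Y_{x_0}$. Each fiber $Y_x$ is a principal homogeneous space for $\Stab_G(x)$, which is a conjugate of $H$ and hence has exactly $|H|$ elements; since there are $\#X$ fibers, $\#Y=\#X\cdot|H|$ is finite. Conversely if $Y$ is finite then each fiber is finite and nonempty (surjectivity), so $X=\pi(Y)$ is finite; or one simply notes the displayed bijection forces $\#Y_x=|\Stab_G(x)|\le|G|$, and $\#X\le\#Y$. This already gives the first half of (ii) in the transitive case: $\#Y=\#X\cdot|H|=(|G|/|H|)\cdot|H|=|G|$, so $\#Y/|G|=1=\#(X/G)$. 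Summing over all orbits yields $\#Y/|G|=\#(X/G)$ in general, and in particular $|G|\mid\#Y$.

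The step I expect to require the most care is making sure the counting in (i)/(ii) is airtight without assuming a global $G$-action on $Y$: the subtlety the authors flag is precisely that we only have, for each $x$ separately, an action of $\Stab_G(x)$ on $Y_x$. But this is in fact all that is needed, because the cardinality computation $\#Y=\sum_{x\in X}\#Y_x=\sum_{x\in X}|\Stab_G(x)|$ is purely fiberwise; the orbit-stabilizer theorem gives $|\Stab_G(x)|=|G|/\#X$ for $x$ in a transitive $G$-set $X$ (all stabilizers in one orbit are conjugate, hence equicardinal), and there is nothing to glue. So the ``torsor'' language is really just a convenient way to package the identity $\#Y_x=|\Stab_G(x)|$, and the lemma reduces to summing the orbit-stabilizer relation over the orbits of $X$.
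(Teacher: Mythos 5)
Your proof is correct and follows essentially the same route as the paper: both arguments reduce to the fiberwise identity $\#Y_x=|\Stab_G(x)|$ and then apply the orbit--stabilizer theorem so that each orbit of $X$ contributes exactly $|G|$ to $\#Y$; whether one groups the sum $\sum_{x\in X}|\Stab_G(x)|$ by orbits afterward (as the paper does) or restricts to one orbit first and then sums (as you do) is only a difference of presentation. Your added care about the absence of a global $G$-action on $Y$ is well placed but, as you correctly note, immaterial to the counting.
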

\begin{proof}
  The first assertion (i) is clear since $G$ is finite. As for (ii)
  note that
$$
\#Y=\sum_{x\in X}\#Y_x=\sum_{x\in X}|\Stab_G(x)|.
$$
Since $|\Stab_G(x)|$ is constant on orbits in $X/G$ we can group terms in
the sum by orbits. The contribution to the sum of an orbit $[ x]$
is then $\#[x]|\Stab_G(x)|=|G|$. Hence 
$$
\#Y=\sum_{[x]}|G|=|G|\cdot\#(X/G).
$$
and the lemma is proved.
\end{proof}

\section{Homomorphisms}
Let $\Gamma$ be a group. The set $X:=\Hom(\Gamma,G)$, where $G$ is a
finite group, is then finite if $\Gamma$ is a finitely generated. The
group $G$ acts on $X$ by conjugation. The quotient
$\#\Hom(\Gamma,G)/|G|$ that we are interested in can be thought of as
a weighted count of homomorphisms from $\Gamma$ to $G$. Namely,
$$
\frac1{|G|}\#\Hom(\Gamma,G)=
\sum_{[\phi]}\frac 1{|\Stab_G(\phi)|},
$$
where $[\phi]$ runs through the $G$-orbits of $\Hom(\Gamma,G)$.

We also have an action of $\Aut(\Gamma)$ on $X$ by
$$
\phi^\sigma(\gamma):=\phi(\sigma^{-1} \gamma), \qquad \sigma \in
\Aut(\Gamma), \quad \phi \in X, \quad \gamma \in \Gamma.
$$

Fix $\sigma\in \Aut(\Gamma)$ and let $X_\sigma:=\Hom_\sigma(\Gamma,G)
\subseteq \Hom(\Gamma,G)$ be the subset of $\phi \in X$ which are
fixed by $\sigma$ up to conjugation by $G$. The two actions, of $G$
and $\Aut(\Gamma)$, on $X$ clearly commute. In particular, the action
of $\Aut(\Gamma)$ passes to the quotient $X/G$ and $X_\sigma$ is a
$G$-set.

Let $\Gamma \rtimes_\sigma \Z$ be the semidirect product of $\Gamma$
and $\Z$ induced by the map $\Z\rightarrow \Aut(\Gamma)$ that takes
$1$ to $\sigma$.  A homomorphism $\Phi \in \Hom(\Gamma \rtimes_\sigma
\Z,G)$ is uniquely determined by the pair $(\phi,g)$, where
$\phi:=\left .\Phi \right|_\Gamma$ and $g\in G$ is the image of
$(1,1)\in \Gamma \rtimes_\sigma \Z$.  The necessary and sufficient
condition for $(\phi,g)$ to arise from a $\Phi$ in this way is
$$
\phi^\sigma(\gamma)=g\phi(\gamma)g^{-1}, \qquad \gamma \in \Gamma.
$$
In particular, $\phi\in \Hom_\sigma(\Gamma,G)$.  It follows that if we
fix one pair $(\phi,g)$ then the set of all other pairs $(\phi,g')$
are given by setting $g'=gs$ with $s\in\Stab_G(\phi)$. We may hence
define an action of $\Stab_G(\phi)$ on these pairs by setting $s\cdot
(\phi,g):=(\phi,gs^{-1})$ This proves the following.

\begin{proposition}
\label{semidirect}
With the above notation for any finite group $G$ the map
$$
\Hom(\Gamma \rtimes_\sigma \Z,G) \rightarrow \Hom_\sigma(\Gamma,G)
$$
given by restriction is a $\cal G$-torsor of $\Hom_\sigma(\Gamma,G)$.
\end{proposition}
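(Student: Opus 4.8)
The plan is to verify directly that the restriction map $\pi\colon\Hom(\Gamma\rtimes_\sigma\Z,G)\to\Hom_\sigma(\Gamma,G)$ satisfies the three defining properties of a $\cal G$-torsor: that its image is exactly $\Hom_\sigma(\Gamma,G)$ (surjectivity onto the stated base), that the fibers are nonempty principal homogeneous spaces for the appropriate stabilizers, and that the action producing this is well defined. Most of the work has already been laid out in the paragraph preceding the statement; the proof will simply assemble it.

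First I would recall the bijection between $\Phi\in\Hom(\Gamma\rtimes_\sigma\Z,G)$ and pairs $(\phi,g)$ with $\phi=\Phi|_\Gamma$ and $g$ the image of $(1,1)$, subject to the compatibility relation $\phi^\sigma(\gamma)=g\,\phi(\gamma)\,g^{-1}$ for all $\gamma\in\Gamma$. This uses only the normal form of elements of the semidirect product and the relation $(1,1)(\gamma,0)(1,1)^{-1}=(\sigma(\gamma),0)$, together with the fact that any choice of images for generators of $\Gamma$ and for $(1,1)$ satisfying this one relation extends consistently. Note $\pi(\Phi)=\phi$. The relation forces $\phi\in\Hom_\sigma(\Gamma,G)$, so the image of $\pi$ lands in $\Hom_\sigma(\Gamma,G)$; conversely, if $\phi\in\Hom_\sigma(\Gamma,G)$ then by definition there exists $g\in G$ with $\phi^\sigma=g\phi(\cdot)g^{-1}$, so $(\phi,g)$ is a valid pair and $\phi$ is in the image. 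Hence $\pi$ is surjective onto $\Hom_\sigma(\Gamma,G)$, and every fiber is nonempty.

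Next I would identify the fiber $\pi^{-1}(\phi)$ with the set of $g\in G$ satisfying $\phi^\sigma(\gamma)=g\,\phi(\gamma)\,g^{-1}$. Having fixed one such $g$, any other $g'$ works if and only if $g^{-1}g'$ centralizes the image of $\phi$, i.e. $g^{-1}g'\in\Stab_G(\phi)$; thus the fiber is the right coset $g\,\Stab_G(\phi)$. Defining $s\cdot(\phi,g):=(\phi,gs^{-1})$ for $s\in\Stab_G(\phi)$ gives a genuine left action of $\Stab_G(\phi)$ on the fiber (associativity: $s_1\cdot(s_2\cdot(\phi,g))=(\phi,gs_2^{-1}s_1^{-1})=(\phi,g(s_1s_2)^{-1})=(s_1s_2)\cdot(\phi,g)$), and for any fixed $(\phi,g)$ in the fiber the orbit map $s\mapsto gs^{-1}$ is a bijection $\Stab_G(\phi)\to g\,\Stab_G(\phi)$ since right multiplication in $G$ and inversion are bijections. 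That is exactly the principal-homogeneous-space condition.

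The only mild subtlety — the step I would be most careful about — is checking that this really defines a $\cal G$-torsor over the $G$-\emph{set} $X_\sigma=\Hom_\sigma(\Gamma,G)$, i.e. that $\Hom_\sigma(\Gamma,G)$ is genuinely closed under the $G$-action by conjugation (so that $\cal G$, the bundle of stabilizers, makes sense on it); this was observed in the text when it was noted that the $G$- and $\Aut(\Gamma)$-actions commute. One should also stress, as the definition in Section~2 does, that we make no claim of a global $G$-action on $\Hom(\Gamma\rtimes_\sigma\Z,G)$ compatible with the fiberwise actions — indeed the natural conjugation action of $G$ on $\Hom(\Gamma\rtimes_\sigma\Z,G)$ does not in general preserve fibers of $\pi$, which is precisely why the weaker torsor notion is needed. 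With these points noted, the proposition follows.
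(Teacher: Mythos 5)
Your proof is correct and follows essentially the same route as the paper, which establishes the proposition via the bijection $\Phi\leftrightarrow(\phi,g)$ subject to $\phi^\sigma(\gamma)=g\phi(\gamma)g^{-1}$ and the action $s\cdot(\phi,g)=(\phi,gs^{-1})$ of $\Stab_G(\phi)$ on each fiber. You simply make explicit a few points the paper leaves implicit (surjectivity onto $\Hom_\sigma(\Gamma,G)$, the fiber being the coset $g\,\Stab_G(\phi)$, and the verification of the action axioms), all of which check out.
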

Combining Proposition~\ref{semidirect} with Lemma~\ref{mass} we obtain
the following corollary
\begin{corollary}
\label{divisibility}
If $\Gamma \rtimes_\sigma \Z$ is finitely generated then

(i) $\Hom_\sigma(\Gamma,G)$ is finite,

(ii) moreover,
$$
\frac1{|G|}\#\Hom(\Gamma \rtimes_\sigma \Z,G) =
\#(\Hom_\sigma(\Gamma,G)/G).
$$
\end{corollary}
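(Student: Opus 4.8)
The plan is to deduce Corollary~\ref{divisibility} directly by feeding Proposition~\ref{semidirect} into Lemma~\ref{mass}, with the only preliminary work being to check that the finiteness hypothesis of the lemma is met. Set $X:=\Hom_\sigma(\Gamma,G)$ and $Y:=\Hom(\Gamma\rtimes_\sigma\Z,G)$, with $\pi:Y\to X$ the restriction map; by Proposition~\ref{semidirect} this is a $\cal G$-torsor of the $G$-set $X$. First I would observe that if $\Gamma\rtimes_\sigma\Z$ is finitely generated then $Y=\Hom(\Gamma\rtimes_\sigma\Z,G)$ is a finite set: a homomorphism out of a finitely generated group into a finite group is determined by the images of a finite generating set, so $\#Y\le|G|^{r}$ where $r$ is the number of generators. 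Then part~(i) of Lemma~\ref{mass} immediately gives that $X=\Hom_\sigma(\Gamma,G)$ is finite as well, which is assertion~(i) of the corollary.

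With finiteness in hand, assertion~(ii) is just the restatement of Lemma~\ref{mass}(ii) for this particular torsor: $\#Y/|G|=\#(X/G)$, i.e.
\[
\frac1{|G|}\#\Hom(\Gamma\rtimes_\sigma\Z,G)=\#\bigl(\Hom_\sigma(\Gamma,G)/G\bigr).
\]
In particular the left-hand side is a non-negative integer, so $|G|$ divides $\#\Hom(\Gamma\rtimes_\sigma\Z,G)$. Nothing more is required; the content has already been isolated in the two previous results.

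If anything could be called the main point, it is the verification that the action of $\Stab_G(\phi)$ described just before Proposition~\ref{semidirect} genuinely makes $\pi$ a torsor rather than merely a surjection with the right fiber cardinalities — but that is the content of Proposition~\ref{semidirect}, which we are entitled to assume. So within the scope of this corollary there is no real obstacle; the only thing to be careful about is not to assume $\Gamma$ itself is finitely generated. Indeed $\Gamma\rtimes_\sigma\Z$ can be finitely generated without $\Gamma$ being so (for instance $\Gamma$ could be an infinitely generated normal subgroup), and the argument above uses only the stated hypothesis on the semidirect product, so the corollary is obtained in the generality claimed.
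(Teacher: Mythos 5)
Your proposal is correct and follows exactly the route the paper intends: the corollary is stated there as an immediate combination of Proposition~\ref{semidirect} with Lemma~\ref{mass}, using finiteness of $\Hom(\Gamma\rtimes_\sigma\Z,G)$ (from finite generation of the semidirect product) together with part (i) of the lemma to get finiteness of $\Hom_\sigma(\Gamma,G)$, and part (ii) for the counting identity. Your closing observation that only $\Gamma\rtimes_\sigma\Z$, not $\Gamma$ itself, need be finitely generated is a correct and worthwhile point of care, consistent with the paper's hypotheses.
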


We should point out that for a finitely generated group $\tilde
\Gamma$ to be isomorphic to a $\Gamma \rtimes_\sigma \Z$ for some
$\Gamma$ and $\sigma$ is equivalent to having infinite
abelianization. If $\tilde \Gamma$ has finite abelianization $A$ then
picking, say, $G=\Z/pZ$ with $p$ a prime not dividing $|A|$ we see
that $|G|$ does not always divide $\#\Hom(\tilde \Gamma,G)$.  We have
proved the following.
\begin{corollary}
\label{infinite}
A finitely generated group $\tilde \Gamma$ has the property that $|G|$
divides $\#\Hom(\tilde \Gamma,G)$ for every finite group $G$ if and
only if it has infinite abelianization.
\end{corollary}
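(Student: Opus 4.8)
The plan is to assemble the two implications from Corollary~\ref{divisibility} together with the elementary structural fact, already flagged in the paragraph before the statement, that a finitely generated group is of the form $\Gamma\rtimes_\sigma\Z$ exactly when it has infinite abelianization.

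For the implication \emph{infinite abelianization $\Rightarrow$ divisibility}, I would first produce a surjection $\tilde\Gamma\to\Z$: the abelianization of $\tilde\Gamma$ is finitely generated, and a finitely generated infinite abelian group has $\Z$ as a quotient, so composing gives the desired surjection $p$. Then I set $\Gamma:=\ker p$, choose $t\in\tilde\Gamma$ mapping to $1\in\Z$, let $\sigma$ be conjugation by $t$ on $\Gamma$, and check that $(\gamma,n)\mapsto\gamma t^n$ is an isomorphism $\Gamma\rtimes_\sigma\Z\cong\tilde\Gamma$. It is worth noting that $\Gamma$ itself may fail to be finitely generated, but this does not matter: Corollary~\ref{divisibility} only requires that $\Gamma\rtimes_\sigma\Z\cong\tilde\Gamma$ be finitely generated, which holds by hypothesis. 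Part (ii) of that corollary then yields $\#\Hom(\tilde\Gamma,G)=|G|\cdot\#(\Hom_\sigma(\Gamma,G)/G)$, giving the divisibility for every finite $G$.

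For the converse I would argue by contraposition: if the abelianization $A$ of $\tilde\Gamma$ is finite, take a prime $p\nmid|A|$ and $G=\Z/p\Z$. Any homomorphism $\tilde\Gamma\to\Z/p\Z$ factors through $A$, and $\Hom(A,\Z/p\Z)$ is trivial since $p$ is coprime to $|A|$; hence $\#\Hom(\tilde\Gamma,G)=1$, which $|G|=p$ does not divide. Thus the divisibility property forces infinite abelianization.

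The one step that actually carries content is the structural equivalence between infinite abelianization and a semidirect-product decomposition over $\Z$. Producing the splitting $\Z\to\tilde\Gamma$ is painless because $\Z$ is free, and the reverse direction is immediate since $\Gamma\rtimes_\sigma\Z$ projects onto $\Z$; everything else is routine bookkeeping layered on top of Corollary~\ref{divisibility}, so I do not anticipate a serious obstacle.
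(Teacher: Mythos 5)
Your proposal is correct and follows essentially the same route as the paper: the forward direction is the structural equivalence between infinite abelianization and a decomposition $\tilde\Gamma\cong\Gamma\rtimes_\sigma\Z$ (which the paper asserts and you usefully spell out, including the observation that $\Gamma$ need not be finitely generated) combined with Corollary~\ref{divisibility}, and the converse is the same $G=\Z/p\Z$ counterexample with $p\nmid|A|$.
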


We can extend the previous results as follows.  Let ${C_i}$ be an
indexed collection of (not necessarily distinct) conjugacy classes in
$G$, and let ${S_i}$ be a collection of subsets of $\Gamma$. Let
$\Hom'(\Gamma \rtimes_\sigma \Z,G)\subseteq \Hom\Gamma \rtimes_\sigma
\Z,G)$ consist of those homomorphisms $\Phi$ such that $\Phi(S_i)
\subseteq C_i$ for all $i$.  Define $\Hom'_\sigma(\Gamma,G)\subseteq
\Hom_\sigma(\Gamma,G)$ similarly and let $\pi'$ be the restriction of
$\pi$ to $\Hom'(\Gamma \rtimes_\sigma \Z,G)$.

\begin{theorem}
\label{main}
The map $\pi'$ takes $\Hom'(\Gamma \rtimes_\sigma \Z,G)$ to
$\Hom'_\sigma(\Gamma,G)$ and
$$
\pi': \Hom'(\Gamma \rtimes_\sigma \Z,G)\to \Hom'_\sigma(\Gamma,G)
$$
is a $\cal G$-torsor of $ \Hom'_\sigma(\Gamma,G)$.
\end{theorem}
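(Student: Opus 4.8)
The plan is to show that Theorem~\ref{main} follows from Proposition~\ref{semidirect} by checking that the constraint conditions ``$\Phi(S_i)\subseteq C_i$'' are compatible with the torsor structure already established. First I would verify the target claim: if $\Phi\in\Hom'(\Gamma\rtimes_\sigma\Z,G)$, then its restriction $\phi=\Phi|_\Gamma$ satisfies $\phi(S_i)\subseteq C_i$ for every $i$ for which $S_i\subseteq\Gamma$ (which is the only case where the condition is meaningful for $\phi$), so $\pi'(\Phi)\in\Hom'_\sigma(\Gamma,G)$. This is immediate because $\phi$ is literally a restriction of $\Phi$, so $\phi(S_i)=\Phi(S_i)\subseteq C_i$. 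Hence $\pi'$ does map into $\Hom'_\sigma(\Gamma,G)$ as claimed.

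Next I would analyze the fibers. Fix $\phi\in\Hom'_\sigma(\Gamma,G)$. By Proposition~\ref{semidirect} the full fiber $\pi^{-1}(\phi)\subseteq\Hom(\Gamma\rtimes_\sigma\Z,G)$ is a principal homogeneous space for $\Stab_G(\phi)$, via $s\cdot(\phi,g)=(\phi,gs^{-1})$. The key point is that the constraint conditions indexed by subsets $S_i$ that are \emph{not} contained in $\Gamma$ only involve the values $\Phi$ takes on elements with nonzero $\Z$-component, i.e.\ elements of the form $\gamma\cdot(1,1)^n$ with $n\neq 0$; concretely $\Phi(\gamma,n)=\phi(\gamma)g^n$. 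Thus the subset $\pi'^{-1}(\phi)=\pi^{-1}(\phi)\cap\Hom'(\Gamma\rtimes_\sigma\Z,G)$ consists of those pairs $(\phi,g)$ in the fiber such that the finitely-or-infinitely many conditions $\phi(\gamma)g^n\in C_i$ (for $(\gamma,n)\in S_i$) all hold. In terms of the free transitive action, if we fix one element $(\phi,g_0)\in\pi'^{-1}(\phi)$ and write an arbitrary fiber element as $(\phi,g_0 s^{-1})$, we need to show that the condition ``$(\phi,g_0 s^{-1})$ satisfies all constraints'' either holds for all $s\in\Stab_G(\phi)$ or is a translate of a subgroup --- but in fact the right statement is simply that $\pi'^{-1}(\phi)$ is again a principal homogeneous space under (possibly a subgroup of) $\Stab_G(\phi)$, and one must identify that subgroup.

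Here is where I would be careful, and this is the main obstacle. It is \emph{not} true in general that $\pi'^{-1}(\phi)$ is a torsor under all of $\Stab_G(\phi)$; rather, one should reduce to the correct stabilizer. The clean way is to observe that the constraints on $\Phi$ coming from $S_i\not\subseteq\Gamma$ can be absorbed by enlarging the group: one replaces $\Gamma\rtimes_\sigma\Z$ and $\Gamma$ by suitable subgroups, or more simply, one notes that a $\cal G$-torsor structure is a \emph{local} condition over the base $X$, so it suffices to check that each nonempty fiber $\pi'^{-1}(\phi)$ is a principal homogeneous space under $\Stab_G(\phi')$ for the appropriate base point --- but since the base here is $\Hom'_\sigma(\Gamma,G)$ with the same $G$-action, the bundle of groups $\cal G$ is unchanged, being $\Stab_G(\phi)$ for $\phi$ in the base. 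So what must be shown is: for $\phi\in\Hom'_\sigma(\Gamma,G)$ with $\pi'^{-1}(\phi)\neq\emptyset$, the set $\{g\in G : \phi^\sigma=g\phi g^{-1},\ \phi(\gamma)g^n\in C_i\ \forall (\gamma,n)\in S_i\}$ is a coset of $\Stab_G(\phi)$ --- equivalently, that if $(\phi,g_0)$ satisfies all constraints then so does $(\phi,g_0 s)$ for every $s\in\Stab_G(\phi)$. I would prove this by showing that the ``bad'' constraints actually cannot occur in the relevant fibers, or alternatively --- and this is likely the intended reading --- that the theorem implicitly assumes the $S_i$ not contained in $\Gamma$ are chosen so that conjugation-invariance of the $C_i$ makes the constraint depend only on the $G$-conjugacy class of $\Phi$, hence is constant on fibers. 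Indeed, since $s\in\Stab_G(\phi)$ conjugates $\Phi$ (as a pair $(\phi,g)\mapsto(\phi,sgs^{-1})$ corresponds to global conjugation by $s$), and each $C_i$ is conjugation-invariant, the truth value of ``$\Phi(S_i)\subseteq C_i$'' is unchanged; but the action is $(\phi,g)\mapsto(\phi,gs^{-1})$, not $(\phi,sgs^{-1})$, so one must reconcile the two --- this is exactly the delicate bookkeeping step.

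Finally, once the fiber is identified as a principal homogeneous space for $\Stab_G(\phi)$ (with $\phi$ ranging over the base $\Hom'_\sigma(\Gamma,G)$, on which $G$ acts by conjugation exactly as before), the definition of $\cal G$-torsor is satisfied verbatim, and the proof concludes. In the writeup I would present it as: (1) restriction maps $\Hom'(\Gamma\rtimes_\sigma\Z,G)$ into $\Hom'_\sigma(\Gamma,G)$ (trivial); (2) surjectivity onto $\Hom'_\sigma(\Gamma,G)$ follows because any $\phi$ in the base, by definition of $\Hom'_\sigma$, lifts; (3) each fiber is a principal homogeneous space for the appropriate stabilizer, using the commuting of the $G$- and $\Aut(\Gamma)$-actions together with the conjugation-invariance of the $C_i$ to see that the constraint is well-defined on fibers. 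I expect step (3) to be the only one requiring genuine care; steps (1) and (2) are one line each.
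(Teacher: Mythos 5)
There is a genuine gap, and it stems from a misreading of the hypotheses. The paper stipulates that the $S_i$ are subsets of $\Gamma$, not of $\Gamma\rtimes_\sigma\Z$. You hedge on this point at the outset (``for every $i$ for which $S_i\subseteq\Gamma$\dots'') and then build the entire difficulty of your step (3) around the hypothetical case $S_i\not\subseteq\Gamma$, where the constraint $\Phi(\gamma,n)=\phi(\gamma)g^n\in C_i$ genuinely does depend on $g$ and would \emph{not} be constant on fibers. You correctly sense that in that case the fiber need not be a full $\Stab_G(\phi)$-orbit, but you never resolve the issue: the proposal ends with ``one must reconcile the two --- this is exactly the delicate bookkeeping step'' and an appeal to what ``the theorem implicitly assumes.'' That is not a proof, and indeed the statement would be false for general $S_i\subseteq\Gamma\rtimes_\sigma\Z$ (take $\Gamma$ trivial, $S_1=\{(1,1)\}$, $C_1$ a conjugacy class whose size is not divisible by $|G|$).

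Once you use the actual hypothesis $S_i\subseteq\Gamma$, the whole difficulty evaporates and the proof is two lines, which is exactly the paper's argument: the condition $\Phi(S_i)\subseteq C_i$ depends only on $\Phi|_\Gamma=\pi(\Phi)$, so $\Hom'(\Gamma\rtimes_\sigma\Z,G)=\pi^{-1}\bigl(\Hom'_\sigma(\Gamma,G)\bigr)$, i.e.\ each fiber of $\pi'$ is the \emph{entire} fiber of $\pi$ and hence already a principal homogeneous space for $\Stab_G(\phi)$ by Proposition~\ref{semidirect}; and $\Hom'_\sigma(\Gamma,G)$ is a sub-$G$-set of $\Hom_\sigma(\Gamma,G)$ because the $C_i$ are conjugacy classes, so the bundle of stabilizers restricts correctly. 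Your steps (1) and (2) are fine, but your step (3) as written neither identifies the correct mechanism nor completes the argument.
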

\begin{proof}
  Note that $\Hom'_\sigma(\Gamma,G)$ is indeed a sub-$G$-set of
  $\Hom_\sigma(\Gamma,G)$ since the $C_i$ are conjugacy classes and
  $\Hom'(\Gamma \rtimes_\sigma \Z,G)=
  \pi^{-1}(\Hom'_\sigma(\Gamma,G))$. Now the claim follows from
  Proposition~\ref{semidirect}.
\end{proof}
As before we obtain
\begin{corollary}
\label{main-corollary}
Let $\Gamma \rtimes_\sigma \Z$ be finitely generated. Then
$$
\frac1{|G|}\#\Hom'(\Gamma \rtimes_\sigma \Z,G)=
\#(\Hom'_\sigma(\Gamma,G)/G).
$$
\end{corollary}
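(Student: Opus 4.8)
The plan is to derive this exactly as Corollary~\ref{divisibility} was derived from Proposition~\ref{semidirect}, namely by feeding Theorem~\ref{main} into Lemma~\ref{mass}. First I would set $Y := \Hom'(\Gamma \rtimes_\sigma \Z,G)$ and $X := \Hom'_\sigma(\Gamma,G)$, and observe that $Y$ is finite: it is a subset of $\Hom(\Gamma \rtimes_\sigma \Z,G)$, and the latter is finite because $\Gamma \rtimes_\sigma \Z$ is finitely generated and $G$ is finite, so a homomorphism out of it is determined by the images of a finite generating set, of which there are only finitely many choices.

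Next, Theorem~\ref{main} says precisely that $\pi' : Y \to X$ is a ${\cal G}$-torsor. Since $Y$ is finite, Lemma~\ref{mass}(i) gives that $X$ is finite as well (alternatively, $X \subseteq \Hom_\sigma(\Gamma,G)$, which is finite by Corollary~\ref{divisibility}(i)). Now Lemma~\ref{mass}(ii) applies verbatim and yields $\#Y/|G| = \#(X/G)$, which is exactly the asserted identity; in particular $|G|$ divides $\#\Hom'(\Gamma \rtimes_\sigma \Z,G)$.

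I do not anticipate a genuine obstacle: the combinatorial content has already been packaged into the torsor formalism, and the corollary is a formal consequence of the two results cited. The only point needing a moment's care is the finiteness of the base $X = \Hom'_\sigma(\Gamma,G)$, since $\Gamma$ itself need not be finitely generated even when $\Gamma \rtimes_\sigma \Z$ is; but this is taken care of either by Lemma~\ref{mass}(i) applied to the torsor $\pi'$, or by the inclusion $\Hom'_\sigma(\Gamma,G)\subseteq\Hom_\sigma(\Gamma,G)$ together with Corollary~\ref{divisibility}(i).
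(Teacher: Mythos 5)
Your proof is correct and is exactly the argument the paper intends by ``As before we obtain'': feed the ${\cal G}$-torsor from Theorem~\ref{main} into Lemma~\ref{mass}, with finiteness coming from the finite generation of $\Gamma \rtimes_\sigma \Z$. Your extra care about the finiteness of the base $\Hom'_\sigma(\Gamma,G)$ is a sensible touch but introduces nothing beyond the paper's route.
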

\begin{remark}
  The fact that $|G|$ divides $\#\Hom'(\Gamma \rtimes_\sigma \Z,G)$
  when $\Gamma \rtimes_\sigma \Z$ is a free group of rank $n$, the
  conjugacy classes $C_i$ are indexed by ${1,2,...,m}$ with $m < n$,
  and each $S_i$ is a singleton is the main result of~\cite{Sol}.
\end{remark}

\section{Examples}
We illustrate some of the issues concerning the ratio $\#\Hom'(\tilde
\Gamma,G)/|G|$ with a few examples.  

(1) Take $\tilde \Gamma$ to be the free group $F$ in $k-1$ generators
given as $\langle x_1,\ldots,x_k \,|\, x_1\cdots x_k =1\rangle$ for
some $k>1$ and $S_i=\{x_i\}$ for $i=1,2,\ldots,k$. Fix a finite group
$G$ and conjugacy classes $C_1,\ldots,C_k$ of $G$ and let $\Hom'(F,G)$
be as above. In general there is no reason to expect $\#\Hom'(F,G)$ to
be divisible by $|G|$, though the denominator of the quotient is often
much smaller than $|G|$.

By a formula extending~\eqref{U-card} we have
\begin{equation}
\label{frobenius}
\#\Hom'(F,G)=\sum_\chi\frac{\chi(1)^2}{|G|}\,  \prod_{i=1}^k
f_\chi(C_i),
\end{equation}
where the sum is over all irreducible characters of $G$ and for a
conjugacy class $C$ we define 
$$
f_\chi(C):=\frac{\#C\, \chi(x)}{\chi(1)}, \qquad \qquad x\in C.
$$
It is known that $f_\chi(C)$ is an algebraic integer.

For example, take $G=S_n$, the symmetric group in $n$ letters, and let
$C_i$, for $i=1,2,\ldots,k$, be the conjugacy class of an $n$-cycle
$\rho:=(12\cdots n)$. The irreducible characters $\chi_\lambda$ of
$S_n$ are parametrized by partitions $\lambda$ of $n$ and
$\chi_\lambda(\rho)=(-1)^r$ if $\lambda=(n-r,1,\ldots,1)$ is the
$r$-th hook and $\chi_\lambda(\rho)=0$ otherwise.

A short calculation then shows that 
$$
\frac
1{|G|}\#\Hom'(F,G)
=\frac1{n^2}\sum_{r=0}^{n-1}((-1)^rr!(n-r-1)!)^{k-2},
$$
which is typically not an integer; for example, for $k=2$ it equals
$n^{-1}$. In fact, for $k>2$ this quantity is an integer unless $n$ is
a prime not dividing $k-1$, in which case the denominator equals
$n$. (We thank J. Gunther for showing us a proof of this fact.)

(2) In the previous example take $k=4$ and $G=\SL_2(\F_q)$ for some
$q=p^r$ with $p>2$ a prime.  For simplicity pick $C_i$, for
$i=1,\ldots,4$, to be the conjugacy class of a diagonal matrix with
eigenvalues $\lambda_i,\lambda_i^{-1} \in \F_q^\times\setminus \{\pm
1\}$. Then using the known description of the irreducible characters
of $G$ we find after some calculation that in this case
$$
\frac
1{|G|}\#\Hom'(F,G)=q^2+4q+1+a\frac{q^2}{q-1},
$$
where
$$
a:=\tfrac12\#\{(\varepsilon_1,\ldots,\varepsilon_4)\in (\pm1)^4 \,| \,
\lambda_1^{\varepsilon_1}\cdots\lambda_4^{\varepsilon_4}=1\}.
$$
If $q$ is sufficiently large there will be a choice of eigenvalues
$\lambda_1,\ldots,\lambda_4$ such that $a=0$, in which case
$\#\Hom'(F,G)/|G|=q^2+4q+1$ is polynomial in $q$. In fact, with this
choice of {\it generic} eigenvalues the action of $G$ by conjugation
on $\#\Hom'(F,G)$ is actually free explaining the divisibility.  (See
\cite{HLRV} for more details).

(3) Consider the group $\tilde \Gamma:=\langle
u_1,\ldots,u_n,z_1,\ldots,z_k \,|\, w(u_1,\ldots,u_n)\cdot z_1\cdots
z_k\rangle$, where $n\geq 1$ and $w$ is an arbitrary word in
$u_1,\ldots,u_n$ that belongs to the commutator of the free group
$\langle u_1,\ldots,u_n\rangle$. Pick a non-trivial homomorphism
$\psi: \tilde \Gamma\to \Z$ with $\psi(z_i)=0$ for $i=1,2,\ldots,k$
and let $\Gamma:=\ker \psi$.  Take $S_i=\{z_i\}$ for $i=1,2,\ldots,k$
and pick arbitrary conjugacy classes $C_1,\ldots,C_k$ in $G$. Then by
Corollary~\ref{main-corollary} $|G|$ divides $\#\Hom'(\tilde
\Gamma,G)$.  Concretely, the number of solutions to the equation
$$
w(u_1,\ldots,u_n)\cdot z_1\cdots z_k = 1, \qquad u_i\in G, \qquad
z_i\in C_i
$$
is divisible by $|G|$ for any finite group $G$. If we take $k=0$ and
$w=[x_1,y_1]\cdots,[x_g,y_g]$ this yields the case of a Riemann
surface~\eqref{frobenius-1} considered in the introduction.

(4) Consider the case where $\tilde \Gamma=\langle x,y \,| \,
w(x,y)\rangle$ is a two generator, one-relator group. Clearly, $\tilde
\Gamma$ has infinite abelianization and hence $\#\Hom(\tilde
\Gamma,G)/|G| \in \Z$ by Corollary~\ref{infinite}. We can express
this quantity in terms of irreducible characters of $G$ as follows.
$$
\frac 1{|G|}\#\Hom(\tilde\Gamma,G)=\sum_\chi s_\chi(w)\, \chi(1),
$$
where for an irreducible character $\chi$ of $G$
$$
s_\chi(w):=\frac1{|G|^2}\sum_{x,y\in G}\chi(w(x,y))
$$
(see~\cite{Se1}). For example, if $w(x,y)=xyx^{-1}y^{-1}$ then
$s_\chi(w)=\chi(1)^{-1}$. We may wonder if it is always the case that
$s_\chi(w)\chi(1) \in \Z$. This turns out not to be true.  In fact,
the numbers $s_\chi(w)\chi(1)$ appear to be fairly arbitrary algebraic
numbers in general.

Consider for example, $w=x^2y^2x^{-2}y^{-2}$ and
$G=\PSL_2(\F_{11})$. Then the values of $s_\chi(w)\chi(1)$ for the
different irreducible characters (computed using Magma) and their sum
are as follows
$$
1+29/6+29/6+551/33+547/33+296/15+w+w'=112,
$$
where $w,w'\in \Q(\sqrt 5)$ are the roots of 
$$
3025x^2 - 146190x +1766236=0.
$$ 
(The corresponding character dimensions $\chi(1)$ are
$1,5,5,10,10,11,12,12$.)

While testing this question numerically we discovered that for some
words, for example $w=x^2yxy^2$, it did indeed seem that
$s_\chi(w)\chi(1)\in \Z$ for all $\chi$. It turns out not difficult to
show why. Consider more generally $w=x^{-m}yx^ny^{-1}$ (take
$m=-2,n=1$ and replace $x$ by $xy$ to obtain a conjugate of the word
$x^2yxy^2$ just mentioned). The resulting group $\tilde \Gamma$ is
called a {\it Baumslag--Solitar} group.

Let $V$ be an irreducible representation of $G$ over $\C$ with
character $\chi$. For a fixed $u\in G$ let
$$
U:=\frac1{|G|}\sum_{y\in G} yuy^{-1}\in \End(V).
$$
It is easy to check that $U$ preserves the $G$ action on $V$. Hence by
Schur's lemma it must be of the form $\gamma \id_V$. Computing traces
we find that $\gamma=\chi(u)/\chi(1)$. Setting $u=x^n$ and computing
the trace of $x^{-m}U$ we finally find that
$$
\chi(1)s_\chi(w)=\langle \Psi^m\chi,\Psi^n\chi\rangle,
$$
where $\langle \alpha,\beta\rangle:=1/|G|\sum_{x\in G}\alpha(x)\bar
\beta(x)$ is the standard inner product of class functions and
$\Psi^m(\chi)(x):=\chi(x^m)$ is the $m$-th Adams operations on
(virtual) characters.  In particular, $\chi(1)s_\chi(w)$ is an integer
for all $\chi$.

More directly, if $n=1$ we can count the solutions to $yxy^{-1}=x^m$
with $x,y\in G$ as follows. Group the solutions according to the
conjugacy class $C$ of $x$. This class must satisfy $C^m=C$ and
contributes precisely $|G|$ to the total since for fixed $x\in C$ we
have that $y$ lies in a coset of the centralizer of $x$. Hence in this
case
$$
\frac 1{|G|}\#\Hom(\tilde\Gamma,G)=\#\{C^m=C\}.
$$
For example, for $G=\SL_2(\F_p)$  it is not hard to verify 
that when $m$ is even this number equals
$$
1+\delta_p(m)+
\tfrac12\sum_{\varepsilon_1, \varepsilon_2=\pm1} 
(\gcd(p+\varepsilon_1, m+\varepsilon_2)-1),
$$
where $\delta_p(m)=2$ if $m$ is a square modulo $p$ and is zero
otherwise. 

\section{Subgroups}

It is a consequence of the exponential formula in combinatorics that
for a finitely generated group $\tilde \Gamma$ we have
$$
F(x):=\sum_{n\geq 0}\#\Hom(\tilde \Gamma,S_n)\,\frac{x^n}{n!}
=\exp\left(\sum_{n\geq 1}u_n(\tilde \Gamma)\,\frac{x^n}n\right),
$$
where $u_n(\tilde \Gamma)$ denotes the number of subgroups of $\tilde
\Gamma$ of index $n$ (see for example~\cite{Me},\cite{Stan}). By
Corollary~\ref{infinite} if $\tilde \Gamma$ has infinite
abelianization then the series on the left hand side has integer
coefficients and hence may be written as an infinite product
$$
F(x)=\prod_{n\geq 1}(1-x^n)^{-v_n(\tilde \Gamma)},
$$
for certain integers $v_n(\tilde \Gamma)$. Comparing the two
expressions we find that
\begin{equation}
  \label{u-v}
  u_n=\sum_{d|n}d\,v_d
\end{equation}
and by M\"obius inversion
\begin{equation}
  \label{v-u}
  v_n=\frac1n \sum_{d|n}\mu\left(\frac nd\right) u_d.
\end{equation}

As before we can write $\tilde \Gamma=\ \Gamma \rtimes_\sigma \Z$ and
by Corollary~\ref{divisibility} we get
$$
F(x)=\sum_{n\geq 0}\#(\Hom_\sigma(\Gamma,S_n)/S_n)\,x^n.
$$
We can interpret the $n$-th coefficient of this series as the number
of isomorphism classes of $\sigma$-equivariant actions of $\Gamma$ on
a set of $n$ elements. It follows that the exponents $v_n(\tilde
\Gamma)$ count the number of such actions which are indecomposable
(i.e., transitive). In particular, $v_n(\tilde \Gamma)$ is a
non-negative integer.

If $\sigma$ is trivial, equivalently if $\tilde \Gamma =\Gamma \times
\Z$ is a direct product, then isomorphism classes of transitive
actions of $\Gamma$ on $n$ objects corresponds to conjugacy classes of
subgroups of $\Gamma$ of index $n$. (This interpretation of
$v_n(\tilde \Gamma)$ appears as exercise 5.13 (c) in~\cite{Stan}
with a different suggested proof.)

From~\eqref{u-v}, we obtain the following.
\begin{proposition}
Let $\tilde \Gamma$ be a finitely generated group with infinite
abelianization and let $u_n:=u_n(\tilde \Gamma)$ be its number of subgroups
of index $n$. Then for every prime number $p$ we have
$$
u_{p^{k+1}}\equiv u_{p^k}\bmod p^{k+1}.
$$
\end{proposition}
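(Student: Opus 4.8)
The plan is to deduce the congruence directly from the relation~\eqref{u-v}, namely $u_n=\sum_{d\mid n}d\,v_d$, together with the fact that each $v_d=v_d(\tilde\Gamma)$ is a non-negative integer (established above via the interpretation as a count of indecomposable $\sigma$-equivariant actions). First I would write out the two relevant instances of~\eqref{u-v}:
\[
u_{p^{k+1}}=\sum_{j=0}^{k+1}p^{j}v_{p^{j}},\qquad u_{p^{k}}=\sum_{j=0}^{k}p^{j}v_{p^{j}},
\]
since the divisors of a prime power $p^{m}$ are exactly $1,p,\ldots,p^{m}$. Subtracting, all terms with $0\le j\le k$ cancel and one is left with
\[
u_{p^{k+1}}-u_{p^{k}}=p^{k+1}v_{p^{k+1}}.
\]
Because $v_{p^{k+1}}$ is an integer, the right-hand side is divisible by $p^{k+1}$, which is exactly the asserted congruence $u_{p^{k+1}}\equiv u_{p^{k}}\bmod p^{k+1}$.

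There is essentially no obstacle here: the entire content is the telescoping of the two partial sums over divisors of consecutive prime powers, and the only input beyond pure bookkeeping is the integrality of the $v_n$, which we already have. I would, however, be slightly careful to note that~\eqref{u-v} (and hence the product expansion $F(x)=\prod_{n\ge1}(1-x^{n})^{-v_n}$ from which it is read off) is available precisely because $\tilde\Gamma$ has infinite abelianization: that hypothesis is what makes $F(x)$ have integer coefficients, via Corollary~\ref{infinite}, so that the exponents $v_n$ are genuine integers rather than merely rationals. Without it the identity $u_{p^{k+1}}-u_{p^{k}}=p^{k+1}v_{p^{k+1}}$ would still hold formally but would say nothing about divisibility.

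If one wanted a slightly more self-contained presentation one could avoid explicitly invoking the product expansion and instead argue: by Corollary~\ref{divisibility} the numbers $\#(\Hom_\sigma(\Gamma,S_n)/S_n)$ are the coefficients of $F(x)$, and the standard exponential-formula identity $u_n=\sum_{d\mid n}d\,v_d$ with $v_n$ counting transitive (indecomposable) $\sigma$-equivariant $\Gamma$-actions on $n$ points shows $v_n\in\Z_{\ge0}$; then telescope as above. Either way the proof is three lines, and I expect the write-up to consist almost entirely of displaying the two sums and subtracting.
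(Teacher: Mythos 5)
Your proof is correct and is exactly the argument the paper intends: the proposition is stated as an immediate consequence of~\eqref{u-v}, and subtracting the two divisor sums for $p^{k+1}$ and $p^k$ leaves $p^{k+1}v_{p^{k+1}}$ with $v_{p^{k+1}}\in\Z$. Your remarks about where the infinite-abelianization hypothesis enters (integrality of the $v_n$) match the paper's setup as well.
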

Here is a  short table of the numbers $u_n$ in the case of
$\tilde \Gamma = \pi_1(\Sigma_g)$, the fundamental group of a genus
$g$ Riemann surface,

\bigskip
\begin{center} \begin{tabular}{|r|rrrrr|} \hline
\bf $g\backslash n$ & 1 & 2 & 3 & 4 & 5 \\
\hline
1 & 1 &  3 &  4 &  7 &  6 \\
2 & 1 &  15 &  220 &  5275 &  151086\\
3 & 1 &  63 &  7924 &  2757307 &  2081946006\\
4 & 1 &  255 &  281740 &  1542456475 &  29867372813886\\
5 &  1 &  1023 &  10095844 &  882442672507 &  429988374084026406\\
\hline
\end{tabular}
\end{center}
\bigskip
and the corresponding numbers $v_n$

\bigskip
\begin{center} \begin{tabular}{|r|rrrrr|} \hline
\bf $g\backslash n$ & 1 & 2 & 3 & 4 & 5 \\
\hline
1 & 1 &  1 &  1 &  1 &  1\\
2& 1 &  7 &  73 &  1315 &  30217\\
3& 1 &  31 &  2641 &  689311 &  416389201\\
4 & 1 &  127 &  93913 &  385614055 &  5973474562777\\
5& 1 &  511 &  3365281 &  220610667871 &  85997674816805281\\
\hline
\end{tabular}
\end{center}

\end{document}